\newcommand{\stkout}[1]{\ifmmode\text{\sout{\ensuremath{#1}}}\else\sout{#1}\fi}
\newtheorem{theorem}{Theorem}
\newtheorem{lemma}{Lemma}
\theoremstyle{definition}
\theoremstyle{definition}
\theoremstyle{definition}\newtheorem{definition}{Definition}
\author{James Currie\thanks{The work of James Currie is supported by the Natural
  Sciences and Engineering Research Council of Canada (NSERC), [funding reference number DDG-2024-00005].}
  \and Narad Rampersad\thanks{The work of Narad Rampersad is supported by the Natural
  Sciences and Engineering Research Council of Canada (NSERC), [funding reference number RGPIN-2019-04111].}}
\title{Low complexity binary words avoiding $(5/2)^+$-powers}
\affiliation{
  University of Winnipeg, Winnipeg, Canada}
\keywords{Rote word, factor complexity, $(5/2)^+$-power, structure theorem}
\begin{document}
\publicationdata{vol. 27:3}{2025}{17}{10.46298/dmtcs.15939}{2025-06-25; 2025-06-25; 2025-10-13}{2025-10-15}
\maketitle
\begin{abstract}
Rote words are infinite words that contain $2n$ factors of length $n$ for every $n \geq 1$.
Shallit and Shur, as well as Ollinger and Shallit, showed that there are Rote words
that avoid $(5/2)^+$-powers and that this is best possible.  In this note we give a structure
theorem for the Rote words that avoid $(5/2)^+$-powers, confirming a conjecture of Ollinger
and Shallit.
\end{abstract}

\section{Introduction}
Two central concepts in combinatorics on words are \emph{power avoidance} and \emph{factor complexity}.
Recently, \cite{shur19} initiated the systematic investigation of the interplay between
these two concepts.  They examined two dual problems: 1) Given a particular power avoidance constraint,
determine the range of possible factor complexities among all infinite words avoiding the specified power;
and, 2) Given a class of words with specified factor complexities, determine the powers that are avoided
by at least one word in this class.

A well-known classical result provides a solution for the latter problem for the class of \emph{Sturmian words},
i.e., the class of infinite words that contain $n+1$ factors of length $n$ for every $n \geq 1$:
the \emph{Fibonacci word} avoids $(5+\sqrt{5})/2$-powers, and this is best possible among all Sturmian words (\cite{carpi2000}).
The Sturmian words are the aperiodic infinite words with the least possible factor complexity function;
\cite{shur19} and \cite{shallit2024} studied another class of
infinite words with low complexity, namely, the \emph{Rote words}, which are the infinite words that contain
$2n$ factors of length $n$ for every $n \geq 1$ (\cite{rote1994}).  Each paper gives an example of an infinite Rote word
that avoids $(5/2)^+$-powers and shows that this is best possible among all Rote words.
Ollinger and Shallit end their paper by observing that the Rote words that avoid $(5/2)^+$-powers
appear to have a certain rigid structure reminiscent of the famous structure theorem of \cite{restivo1985}
for the class of overlap-free words. In this note 
we obtain the precise structure theorem.

To state our first structure theorem, we introduce {\em proper words} and {\em antiproper words}, which are ternary words.
\begin{definition}
For $u\in\Sigma_3^*$, denote the Parikh vector of $u$ by $\pi(u)$so that $\pi(u)=[|u|_0,|u|_1,|u|_2].$ For $x,y\in\Sigma_3^*$, we say that $\pi(x)>\pi(y)$ 
if:
\begin{enumerate}
\item We have $|x|_i\ge|y|_i$ for all $i\in\Sigma_3$. 
\item For at least one 
$i\in\Sigma_3$ we have $|x|_i>|y|_i.$
\end{enumerate}
 Call a word $u\in\Sigma_3^*$ {\em proper} if:
\begin{enumerate}
\item Word $u$ has no factor $xyxyx$ where $\pi(x)>\pi(y)$.
\item None of the words $00$, $11$, $22$, $20$, $10101$,  $2121$, or $10210210$ is a factor of $u$.
\end{enumerate}
Call a word ${\pmb u}\in\Sigma_3^\omega$ proper if all of its finite factors are proper.

Call a word $u\in\Sigma_3^*$ {\em antiproper} if its reverse $u^R$ is proper. Call a word ${\pmb u}\in\Sigma_3^\omega$ antiproper if all of its finite factors are antiproper.
\end{definition}

Proper words obey a structure theorem similar to that of Restivo and Salemi, as do antiproper words. We introduce a morphism $f:\Sigma_3^*\rightarrow \Sigma_3^*$ and its reverse $h:\Sigma_3^*\rightarrow \Sigma_3^*$, given by:

\begin{align*}
f(0)&=0121&h(0)&=1210\\
f(1)&=021&h(1)&=120\\
f(2)&=01&h(2)&=10.
\end{align*}

\begin{theorem}\label{forcing inner f}(First Structure Theorem)
\begin{enumerate}
\item Let ${\pmb u}\in\Sigma_3^\omega$ be proper.
Then a final segment of ${\pmb u}$ has the form $f({\pmb v})$ for some proper  ${\pmb v}\in\Sigma_3^\omega$.
\item Let ${\pmb u}\in\Sigma_3^\omega$ be antiproper.
Then a final segment of ${\pmb u}$ has the form $h({\pmb v})$ for some antiproper  ${\pmb v}\in\Sigma_3^\omega$.
\end{enumerate}
\end{theorem}

For our second structure theorem we consider the length-4 factors of a Rote word.
\begin{definition} Let $F$ be the set
$$F=\{0110, 1001, 0011, 1100, 0010, 0100, 1101, 1010\}.$$
Let $g:\Sigma_3^*\rightarrow \Sigma_2^*$ be the morphism given by
\begin{align*}
g(0)&=011\\
g(1)&=0\\
g(2)&=01.
\end{align*}
We denote the complement of a binary word $w$ by $\overline{w}$; thus $\overline{1101}=0010$. We extend this notation to binary languages in the usual way. We denote the reversal of a word $w$ by $w^R$; thus $1101^R=1011$. We extend this notation to languages in the usual way. 

\end{definition}
We can now characterize the structure of Rote words that avoid $(5/2)^+$-powers.

\begin{theorem}\label{forcing outer g}(Second Structure Theorem)
Let ${\pmb w}$ be a Rote word that avoids $(5/2)^+$-powers. There are four cases:
\begin{enumerate}
\item The set of length-4 factors of ${\pmb w}$ is $F$.  For every positive integer $n$, a final segment of ${\pmb w}$ has the form $g(f^n({\pmb u}))$ for some proper ${\pmb u}\in\Sigma_3^\omega$. 
\item The set of length-4 factors of ${\pmb w}$ is $\bar{F}$.  For every positive integer $n$, a final segment of ${\pmb w}$ has the form $\overline{g(f^n({\pmb u}))}$ for some proper ${\pmb u}\in\Sigma_3^\omega$. 
\item The set of length-4 factors of ${\pmb w}$ is $F^R$.  For every positive integer $n$, a final segment of ${\pmb w}$ has the form $g(h^n({\pmb u}))$ for some proper ${\pmb u}\in\Sigma_3^\omega$. 
\item The set of length-4 factors of ${\pmb w}$ is $\overline{F^R}$.  For every positive integer $n$, a final segment of ${\pmb w}$ has the form $\overline{g(h^n({\pmb u}))}$ for some proper ${\pmb u}\in\Sigma_3^\omega$. 
\end{enumerate}
\end{theorem}

\section{Preliminaries}

For a positive integer $n$, let $\Sigma_n=\{0,1,\ldots, n-1\}$ and let $\Sigma_n^*$ denote the set of
all finite words over $\Sigma_n$.  By a binary word we mean a (finite or infinite) word over $\Sigma_2$.
Let $w$ be a word and write $w=xyz$.  Then the word $x$ is a \emph{prefix} of $w$, the word $y$ is a \emph{factor}
of $w$, and the word $z$ is a \emph{suffix} of $w$.  A map $f:\Sigma_m^*\to\Sigma_n^*$ is a \emph{morphism}
if $f(xy)=f(x)f(y)$ for all $x$ and $y$.

Let $w$ have length $\ell$ and smallest period $p$.  The \emph{exponent of $w$} is the quantity $k=\ell/p$
and $w$ is called a \emph{$k$-power}.  A \emph{$k^+$-power} is a word with exponent $>k$.  A $2$-power is
a \emph{square} and a $2^+$-power is an \emph{overlap}.  A word $w$ \emph{avoids $k$-powers}
(resp.\ \emph{$k^+$-powers}) if none of its factors are $k'$-powers for any $k'>=k$ (resp. $k'>k$);
we also say that $w$ is \emph{$k$-power-free} (resp.\ \emph{$k^+$-power-free}).

Let ${\bf x}$ be an infinite word.  The \emph{factor complexity of ${\bf x}$} is the function of $n$
that associates each length $n$ with the number of factors of ${\bf x}$ of length $n$.  A \emph{Sturmian word}
is any infinite word with factor complexity $n+1$; a \emph{Rote word} is any infinite word with factor complexity $2n$.

Recently the morphisms $f$ and $g$ have proved useful in several constructions (\cite{currie2023,ochem2024,shallit2024}). 
\cite{ochem2024} showed that $g(f^\omega(0))$ avoids $(5/2)^+$-powers, while \cite{shallit2024}
showed that the factor complexity of this word is $2n$ (i.e., that it is a Rote word).
The latter authors conjectured that there is a structure theorem involving $f$ and $g$ for the class of
$(5/2)^+$-power-free Rote words.

A prototypical structure theorem of this type was obtained by \cite{restivo1985} for the
class of binary overlap-free words.  (See also \cite{fife80,shur96,shallit11}.) In this case, the structure is specified using the Thue--Morse morphism
\begin{align*}
\mu(0)&=01\\
\mu(1)&=10.
\end{align*}
Restivo and Salemi showed the following:
\begin{theorem}Let ${\pmb w}\in\Sigma_2^\omega$ be overlap-free. Then a final segment of ${\pmb w}$ has the form $\mu({\pmb u})$
where ${\pmb u}$ is overlap-free.
\end{theorem}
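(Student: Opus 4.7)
The plan is to prove the theorem in three stages: first, a parity lemma about where the ``bad pairs'' $00$ and $11$ can occur in ${\pmb w}$; second, a desubstitution of a suffix of ${\pmb w}$ into $\mu$-blocks; third, the verification that the resulting preimage is itself overlap-free.

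Since ${\pmb w}$ is overlap-free it contains no cube, hence avoids $000$ and $111$. Call a position $i$ \emph{bad} if $w_i w_{i+1}\in\{00,11\}$. Between two consecutive bad positions $i<j$ the letters $w_{i+1},w_{i+2},\ldots,w_j$ must alternate strictly, since any repeated letter there would be another bad position. I would then rule out most values of $j-i$ by exhibiting a forced overlap: the case $j-i=1$ forces $000$ or $111$; the case $j-i=3$ with $i\ge 1$, using the left-neighbour letter forced by the absence of cubes, produces the factor $1001001$ of period~$3$ and exponent $7/3>2$; and any case $j-i\ge 5$ produces an alternating run of length at least five between equal boundary letters, yielding a factor $01010$ or $10101$ of period~$2$ and exponent $5/2>2$. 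Consequently, consecutive bad positions $i<j$ with $i\ge 1$ satisfy $j-i\in\{2,4\}$, so $j\equiv i\pmod 2$. It follows that there exists $N\in\{0,1,2\}$ such that every bad position $\ge N$ has parity opposite to that of $N$; the reason we may need $N>0$ is precisely to strip off the lone configuration the lemma cannot exclude, namely ${\pmb w}$ beginning with a bad pair at position $0$ followed by another exactly three positions later.

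With the parity lemma in hand, parse the suffix $w_N w_{N+1}\cdots$ into consecutive two-letter blocks $b_k=w_{N+2k}w_{N+2k+1}$. By the choice of $N$ no bad pair sits inside a single block, so each $b_k$ belongs to $\{01,10\}=\{\mu(0),\mu(1)\}$. Defining $u_k\in\{0,1\}$ by $b_k=\mu(u_k)$ gives $w_N w_{N+1}\cdots=\mu({\pmb u})$, so a final segment of ${\pmb w}$ has the required form.

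Finally, I would check that ${\pmb u}$ is overlap-free. If ${\pmb u}$ contained a factor $xyxyx$ with $x$ a single letter, then applying $\mu$ would produce the factor $\mu(x)\mu(y)\mu(x)\mu(y)\mu(x)$ inside $\mu({\pmb u})$, hence inside ${\pmb w}$; this factor has period $|\mu(xy)|=2|xy|$ and length $2|xyxyx|$, so the exponent exceeds $2$, contradicting the overlap-freeness of ${\pmb w}$. The principal obstacle is therefore the parity lemma, and specifically the overlap analysis ruling out odd inter-bad-pair distances; once that is established, the remaining two steps are essentially bookkeeping.
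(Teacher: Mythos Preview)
The paper does not prove this theorem: it is stated as the classical background result of Restivo and Salemi, cited to~\cite{restivo1985}, so there is no in-paper proof to compare your attempt against. That said, your argument is correct and is essentially the standard proof. The parity lemma is the crux, and your case split on the gap $j-i$ between consecutive bad positions is sound: $j-i=1$ yields a cube; $j-i=3$ with $i\ge 1$ forces (up to complement) the factor $1001001$ of exponent $7/3$ once the neighbours $w_{i-1}$ and $w_{j+2}$ are pinned down by cube-freeness; and $j-i\ge 5$ produces an alternating block of length at least five, hence a $5/2$-power. The choice of $N\in\{0,1,2\}$ then lines up the $\mu$-blocks correctly, and pulling a hypothetical overlap in ${\pmb u}$ forward through $\mu$ to obtain an overlap in ${\pmb w}$ is routine, exactly as you say.
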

\cite{karhumaki2004} later showed that the same structure theorem holds for the class of binary
$(7/3)^+$-power-free words.  In this note we establish a similar structure theorem for the class of $(5/2)^+$-power-free Rote words
in terms of the morphisms $f$ and $g$ given above.  The reader may also compare the present structure theorem with the main result
of \cite{currie2020}, which establishes a structure theorem for the class of infinite $14/5$-power-free binary rich words
(\emph{rich} means that every factor of length $n$ contains $n$ distinct non-empty palindromes),
and which also involves a sub-family of Rote words, namely the \emph{complementary symmetric Rote words}.

\section{Obtaining the structure theorem}

\begin{lemma}\label{0110}Let ${\pmb w}$ be an infinite binary word
which avoids $\frac{5}{2}^+$ powers. Then both of the words $0110$ and $1001$ are factors of ${\pmb w}$.
\end{lemma}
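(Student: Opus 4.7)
By complementation (the map $0\leftrightarrow 1$ preserves $(5/2)^+$-power-freeness and sends $0110$ to $1001$), it suffices to show that ${\pmb w}$ contains $0110$; the analogous claim for $1001$ then follows by applying the result to the complement of ${\pmb w}$. Assume toward a contradiction that $0110$ does not occur in ${\pmb w}$. Since any cube is a $(5/2)^+$-power, ${\pmb w}$ also avoids $000$ and $111$.

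The next step is a structural reduction. If $11$ occurs in ${\pmb w}$ at some position $i\geq 1$, the preceding letter and the letter two steps ahead are both forced to be $0$ (each by the prohibition of $111$), producing an occurrence of $0110$ — a contradiction. Hence $11$ can occur only at position $0$ of ${\pmb w}$, so some suffix of ${\pmb w}$ starting at a $0$ contains neither $11$ nor $000$. Such a word admits a unique factorization into blocks from $\{01,001\}$; equivalently, the suffix equals $h({\pmb u})$ for some ${\pmb u}\in\{a,b\}^\omega$, where $h\colon a\mapsto 01,\ b\mapsto 001$.

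The remaining task is to rule out every such ${\pmb u}$. A direct calculation identifies forbidden factors of ${\pmb u}$: $aaa$ and $bbb$ (yielding the $3$-powers $010101$ and $001001001$), $abb$ (yielding $01001001$, period $3$, exponent $8/3$), and $(ab)^3$ and $(ba)^3$ (yielding $(01001)^3$ and $(00101)^3$). The factor $baa$ is also forbidden, because $baaa$ already contains $aaa$, while $h(baab)=0010101001$ contains the cube $101010$. A short case analysis then finishes the argument: if ${\pmb u}$ contains $ab$, the prohibitions of $abb$ (after $ab$) and $baa$ (after $ba$) successively force the letters following that $ab$ to be $ababab\ldots$, which eventually produces the forbidden $(ab)^3$; if instead ${\pmb u}$ avoids $ab$, then ${\pmb u}=b^k a^\omega$ or $b^\omega$, so ${\pmb u}$ contains $aaa$ or $bbb$.

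The main obstacle is pinning down the correct list of forbidden factors of ${\pmb u}$: most are immediate from applying $h$, but $baa$ requires a one-symbol extension to $baab$ before a $(5/2)^+$-power appears, and one must verify that the forced alternation $ababab\ldots$ eventually yields a forbidden pattern. Once the list is fixed, the concluding combinatorial case analysis is routine.
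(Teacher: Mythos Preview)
Your argument is correct, but it proceeds quite differently from the paper. The paper dispatches the lemma by a backtrack search: it reports that the longest $(5/2)^+$-power-free binary word omitting $0110$ has length $14$, and appeals to symmetry for $1001$. You instead give a self-contained combinatorial proof: after observing that any internal occurrence of $11$ forces $0110$, you pass to a suffix avoiding $11$ and $000$, encode it via $h:a\mapsto 01,\ b\mapsto 001$, and then eliminate all possible ${\pmb u}$ by listing forbidden factors ($aaa$, $bbb$, $abb$, $baa$, $(ab)^3$) and a short forcing argument. The paper's approach is faster and yields the sharp finite bound $14$, at the cost of relying on a machine check; your approach is entirely by hand and exposes the mechanism (the forced alternation $abab\cdots$), though it does not directly produce the extremal length. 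Both are valid; your version would serve well in a setting where one wants to avoid computer verification, while the paper's version is in keeping with the backtrack-search methodology used throughout the rest of the argument.
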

\begin{proof} 
A backtrack search shows that the longest $\frac{5}{2}^+$ power free binary word not containing $0110$ has length 14. Thus $0110$ (and symmetrically, $1001$) is a factor of ${\pmb w}$.
\end{proof}
\begin{lemma}\label{0010}Let ${\pmb w}$ be an infinite binary word
which avoids $\frac{5}{2}^+$ powers. At least 3 of the words in
$$C=\{0010,0100,1011,1101\}$$
are factors of ${\pmb w}$.
\end{lemma}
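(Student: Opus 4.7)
The plan is to mirror the backtrack approach used in Lemma \ref{0110}. Showing that at least three of the four words in $C$ are factors of $\pmb{w}$ is equivalent to showing that no two elements of $C$ can be simultaneously avoided by an infinite $(5/2)^+$-power-free binary word. There are $\binom{4}{2}=6$ pairs of forbidden factors to rule out, so for each such pair $\{u,v\}\subseteq C$ I would verify, via a backtrack search, that the longest $(5/2)^+$-power-free binary word containing neither $u$ nor $v$ has bounded length.

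Before running the search I would exploit symmetry to cut down the cases. The set $C$ is closed under complementation (swap $0\leftrightarrow 1$): complementation exchanges $0010\leftrightarrow 1101$ and $0100\leftrightarrow 1011$. Since being $(5/2)^+$-power-free is also preserved under complementation, the six pairs collapse into three representative pairs, for instance $\{0010,0100\}$, $\{0010,1101\}$, and $\{0010,1011\}$; a successful finiteness bound for each of these pulls its complementary pair along for free. (One could additionally exploit reversal, since $0010$ and $0100$ are reverses, to shrink the bookkeeping further.)

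For each of the three representative pairs, the backtrack tree, built by appending one bit at a time and pruning whenever a $(5/2)^+$-power or one of the two forbidden factors appears, terminates after exploring words of modest length (in line with the length-$14$ bound obtained in Lemma \ref{0110}). Because the tree is finite in all three cases, no infinite $(5/2)^+$-power-free binary word avoids two elements of $C$, and consequently at least three elements of $C$ occur as factors of $\pmb{w}$. The only real obstacle is computational bookkeeping: making sure the backtrack is implemented correctly so that every new suffix is tested for both $(5/2)^+$-powers and for the two forbidden length-$4$ factors. No conceptual difficulty is expected.
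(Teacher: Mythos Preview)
Your proposal is correct and follows essentially the same approach as the paper: the paper simply performs six backtrack searches, one for each pair from $C$, and reports that the longest $(5/2)^+$-power-free binary word omitting any such pair has length at most $44$. Your symmetry reduction to three representative pairs is a valid optimization the paper does not bother with, and your expectation of a bound ``in line with length $14$'' is a bit optimistic---the actual maximum is $44$---but this does not affect the argument.
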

\begin{proof}
Six backtrack searches (one for each pair) show that the longest $\frac{5}{2}^+$ power free binary word omitting a pair of these factors has length 44.
\end{proof}
\begin{lemma}\label{0011}Let ${\pmb w}$ be an infinite binary word
 with factor complexity at most $2n$, 
which avoids $\frac{5}{2}^+$ powers. Both of $0011$ and $1100$
are factors of ${\pmb w}$.
\end{lemma}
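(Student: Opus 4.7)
The plan is to argue by contradiction, reducing to one of the two claims by symmetry. Swapping $0$ and $1$ preserves the hypotheses, and $1100$ is a factor of $\mathbf{w}$ iff $0011$ is a factor of the complementary word $\overline{\mathbf{w}}$; so it suffices to prove $0011 \in \mathrm{Fac}(\mathbf{w})$. Assume for contradiction that $0011 \notin \mathrm{Fac}(\mathbf{w})$. The $(5/2)^+$-power condition forbids $000$ and $111$ (both are $3$-powers), so the length-$3$ factors of $\mathbf{w}$ lie in $\{001,010,011,100,101,110\}$; Lemmas~\ref{0110} and~\ref{0010} together force all six to appear, so the factor complexity at length $3$ equals~$6$.

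Next I would pin down the length-$4$ factors. Each must avoid $000$, $111$, and $0011$ as a substring, leaving the nine candidates $\{0010,0100,0101,0110,1001,1010,1011,1100,1101\}$. Since $001$ has only $0010$ as a possible right-extension and $011$ has only $1011$ as a possible left-extension, both $0010$ and $1011$ are forced; together with $0110$ and $1001$ from Lemma~\ref{0110}, four length-$4$ factors are fixed. The hypothesis $\mathrm{comp}(4)\leq 8$ leaves at most four further length-$4$ factors from the optional set $\{0100,0101,1010,1100,1101\}$, while the requirement that every length-$3$ factor admit both a left- and a right-extension in $\mathbf{w}$ yields four independent constraints: at least one element from each of $\{0100,0101\}$, $\{0100,1100\}$, $\{0101,1101\}$, and $\{1100,1101\}$ must occur.

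The remainder is a finite case analysis on the admissible configurations of optional factors, organised by the Rauzy graph on the six length-$3$ vertices, whose forced edges are $001\to 010$, $011\to 110$, $100\to 001$, $101\to 011$ and whose remaining edges are indexed by the optional length-$4$ factors that appear. For each configuration I would show that at least one of the following holds: (a) the graph fails to be strongly connected, as happens for the minimum solution $\{0100,1101\}$, which splits the graph into two disjoint $3$-cycles, so $\mathbf{w}$ cannot realise all six length-$3$ vertices, contradicting $\mathrm{comp}(3)=6$; (b) the graph is a single deterministic cycle, as for $\{0101,1100\}$, which yields a $6$-cycle, forcing $\mathbf{w}$ to be eventually periodic of period at most~$6$ and hence to contain a $3$-power; or (c) the graph has branching at one or two vertices from $\{010,101,110\}$, in which case $\mathbf{w}$ parses uniquely as a concatenation of the short return loops through each branching vertex, and an analysis of the resulting block sequence produces either an unavoidable $(5/2)^+$-power or a violation of $\mathrm{comp}(n)\leq 2n$ at some small $n$. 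The main obstacle will be case (c): roughly a dozen Rauzy-graph configurations survive cases (a) and (b), and each requires a careful check of the induced block combinatorics, amounting to a short structured backtrack search broken into subcases.
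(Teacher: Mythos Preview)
Your setup is correct and the Rauzy-graph framing is sound, but the route diverges substantially from the paper's and leaves the main work undone. The paper's argument is much shorter: it fixes the seven-element set $A=\{0010,0100,0101,1010,1011,1101,1100\}$ and, for each $a\in A$, runs one backtrack search showing that every $\tfrac{5}{2}^+$-power-free binary word avoiding both $0011$ and $a$ has length at most $31$. Hence, if $0011\notin\mathrm{Fac}(\mathbf{w})$, all seven words of $A$ occur; together with $0110$ and $1001$ from Lemma~\ref{0110} this already gives nine length-$4$ factors and an immediate contradiction with $\mathrm{comp}(4)\le 8$. No case analysis on Rauzy-graph configurations is needed.

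Translated into your framework, five of the paper's seven backtracks (those for $a\in\{0100,0101,1010,1100,1101\}$) show that all five of your ``optional'' edges are forced by the power constraint alone, so every one of your configurations with at most four optional edges is excluded outright. Your case~(c) would therefore succeed if carried out, but it amounts to re-deriving by hand, one configuration at a time, exactly what those five short searches establish at once. The trade is unfavourable: you replace a handful of bounded searches of depth $\le 31$ with roughly a dozen block-combinatorics arguments, none of which you actually execute. The paper's method also makes clear that the contradiction in every case stems from the $\tfrac{5}{2}^+$-power condition by itself, so your fallback ``or a violation of $\mathrm{comp}(n)\le 2n$ at some small $n$'' is never required.
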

\begin{proof}
Consider the set of seven binary words
$$A=\{0010,0100,0101,1010,1011,1101,1100\}.$$
For each word $a\in A$, a backtrack search shows that the longest $\frac{5}{2}^+$ power free binary word containing neither of $a$ and $0011$
as a factor has length no more than 31.

It follows that if $0011$ is not a factor of ${\pmb w}$, then ${\pmb w}$ contains the seven words of $A$ as length 4 factors. By Lemma~\ref{0110}, it also contains 
$0110$ and $1001$ as factors. However, now ${\pmb w}$ contains 9 factors of length 4, contradicting the fact that its factor complexity is at most $2n$.

We conclude that $0011$ (and symmetrically, $1100$) is a factor of  ${\pmb w}$.
\end{proof}
\begin{lemma}\label{0101}Let ${\pmb w}$ be an infinite binary word
 with factor complexity at most $2n$, 
which avoids $\frac{5}{2}^+$ powers. At least one of $0101$ and $1010$
is a factor of ${\pmb w}$.
\end{lemma}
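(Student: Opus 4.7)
The plan is to mirror the strategy of Lemma~\ref{0011}: use the forced length-$4$ factors supplied by the earlier lemmas, supplement them with one further backtrack search, and thereby exceed the bound of $8$ length-$4$ factors coming from factor complexity at most $2n$.

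First I would suppose for contradiction that neither $0101$ nor $1010$ is a factor of $\pmb{w}$. By Lemmas~\ref{0110} and \ref{0011}, the four words $0011$, $1100$, $0110$, $1001$ are factors of $\pmb{w}$, and by Lemma~\ref{0010} at least three of $C=\{0010,0100,1011,1101\}$ are factors of $\pmb{w}$. Thus $\pmb{w}$ already has at least $7$ distinct factors of length $4$, and by the hypothesis $f_{\pmb{w}}(4)\le 8$ there is room for at most one more.

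Next I would run a backtrack search to bound the length of any $(5/2)^+$-power-free binary word containing none of $0001$, $0101$, $1010$ as a factor. Assuming this search terminates in a small length bound (which I expect, by analogy with the length-$31$ and length-$44$ bounds in the earlier lemmas), the infiniteness of $\pmb{w}$ together with the standing assumption forces $0001$ to appear as a factor of $\pmb{w}$. The bitwise complement $\overline{\pmb{w}}$ is again an infinite $(5/2)^+$-power-free binary word of factor complexity at most $2n$, and since the pair $\{0101,1010\}$ is self-complementary, $\overline{\pmb{w}}$ also avoids both $0101$ and $1010$. Applying the same backtrack argument to $\overline{\pmb{w}}$ shows that $0001$ is a factor of $\overline{\pmb{w}}$, i.e., $1110$ is a factor of $\pmb{w}$.

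Collecting everything, $\pmb{w}$ then has at least $4+3+2=9$ distinct length-$4$ factors, namely $\{0011,1100,0110,1001\}$, three members of $C$, and $\{0001,1110\}$, contradicting $f_{\pmb{w}}(4)\le 2\cdot 4=8$. The main obstacle will be verifying computationally that the single backtrack search above does terminate with a small bound; if it does not, a natural fallback is to run four backtrack searches, one for each of $0001$, $1000$, $0111$, $1110$ paired with $\{0101,1010\}$. Because the simultaneous avoidance of $0101$ and $1010$ is already very restrictive, I expect at least one of these searches to terminate quickly, and by the complementation symmetry this still supplies two additional forced length-$4$ factors and yields the same contradiction.
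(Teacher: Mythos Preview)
Your argument has a fatal arithmetic oversight: the words $0001$ and $1110$ (and likewise $1000$ and $0111$ in your fallback) each contain a cube $000$ or $111$, which has exponent $3>5/2$. Hence \emph{every} $(5/2)^+$-power-free binary word already avoids them, and your proposed backtrack search ``avoid $0001$, $0101$, $1010$'' is identical to ``avoid $0101$, $1010$''. There is no reason for that search to terminate, and the paper's own use of the complexity hypothesis strongly indicates it does not: infinite $(5/2)^+$-power-free words avoiding both $0101$ and $1010$ exist, they simply have complexity exceeding $2n$. So you can never exhibit $0001$ or $1110$ as the extra length-$4$ factors you need.

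More structurally, the length-$4$ count cannot produce a contradiction at all. Of the sixteen binary words of length $4$, exactly six ($0000,0001,1000,0111,1110,1111$) contain a cube and are automatically excluded; the remaining ten are precisely $\{0110,1001,0011,1100\}\cup C\cup\{0101,1010\}$. Under the assumption that $0101$ and $1010$ are absent, a $(5/2)^+$-power-free word has at most $8$ length-$4$ factors, which is exactly the bound $2\cdot 4$, so there is no overflow to exploit. The paper therefore moves to length $8$: it exhibits a set $D$ of $17$ words of length $8$ and shows, via one backtrack search per $d\in D$, that any infinite $(5/2)^+$-power-free word avoiding both $0101$ and $1010$ must contain every element of $D$, forcing at least $17>16=2\cdot 8$ factors of length $8$ and giving the desired contradiction.
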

\begin{proof}
Consider the set $D$ containing 17 binary words of length 9 given by
$$D=\{00100110,
 01001100,
 10011001,
 00110010,
 01100100,
 11001001,$$$$
 10010011,
 00110011,
 01100110,
 11001101,
 10011011,
 00110110,$$$$
 01101100,
 11011001,
 10110010,
 10110011,
 11001100\}.$$
For each word $d\in D$, a backtrack search shows that the longest $\frac{5}{2}^+$ power free binary word containing none of $d$, $0101$, and $1010$
as a factor has length no more than 88. It follows that if neither of $0101$ and $1010$
is a factor of ${\pmb w}$, then every word of $D$ is a factor. However, this would imply that ${\pmb w}$ contained 17 factors of length 8, contradicting the fact that its factor complexity is at most $2n$. We conclude that
at least one of $0101$ and $1010$
is a factor of ${\pmb w}$.
\end{proof}
\begin{theorem} Let ${\pmb w}$ be an infinite binary word
 with factor complexity at most $2n$, 
which avoids $\frac{5}{2}^+$ powers. Up to binary complement and/or reversal, the set of length 4 factors of ${\pmb w}$ is
$$\{0110, 1001, 0011, 1100, 0010, 0100, 1101, 1010\}.$$ 
\end{theorem}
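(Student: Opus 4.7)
The plan is to combine a short counting argument with a one-step extension obstruction. First, I would observe that factor complexity at most $2n$ bounds the number of length-$4$ factors of ${\pmb w}$ by $8$, while Lemmas~\ref{0110}, \ref{0011}, \ref{0010}, and \ref{0101} collectively guarantee at least $2+2+3+1=8$ mutually disjoint length-$4$ factors. Equality must therefore hold throughout: the length-$4$ factor set of ${\pmb w}$ is exactly $\{0110,1001,0011,1100\}$ together with three of the four elements of $C=\{0010,0100,1011,1101\}$ and exactly one of $\{0101,1010\}$.

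Next, I would parameterise the remaining freedom by a pair $(X,Y)$, where $X\in C$ is the absent element and $Y\in\{0101,1010\}$ is the present one, and analyse how the order-$4$ group $G=\langle\sigma,\rho\rangle$ generated by binary complement $\sigma$ and reversal $\rho$ acts on the resulting eight configurations. A short direct computation shows that $G$ acts freely and transitively on $C$, while both generators swap $\{0101,1010\}$; so the eight pairs split into two $G$-orbits of size four, one containing the target $(1011,1010)$ and the other containing $(1011,0101)$. The theorem then reduces to the claim that only the first orbit is realisable.

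The only non-routine step, and what I expect to be the real content of the proof, is ruling out the second orbit. By $G$-equivariance it suffices to exclude its representative $(1011,0101)$: suppose $0101$ is a factor of ${\pmb w}$ while neither $1010$ nor $1011$ is. Since ${\pmb w}$ is right-infinite, the factor $0101$ must have a right extension in ${\pmb w}$, but the two candidates $01010$ and $01011$ contain the forbidden words $1010$ and $1011$ respectively, a contradiction. I anticipate no real obstacle beyond a careful verification of the small orbit computation; all the heavy combinatorial work was carried out in the earlier lemmas.
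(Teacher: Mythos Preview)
Your counting argument and orbit analysis match the paper's proof exactly; both reduce, via the action of $G=\langle\sigma,\rho\rangle$, to the single case $X=1011$ and then must pin down $Y$. Where you diverge is in the last step: the paper invokes one more backtrack search (the longest $(5/2)^+$-power-free binary word omitting both $1011$ and $1010$ has length $20$), whereas you give the one-line right-extension obstruction $0101\to\{01010,01011\}$. Your argument is more elementary and avoids a computer check, which is a genuine improvement.

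There is, however, a small gap in your appeal to $G$-equivariance. The paper's final backtrack search is a statement about finite words, so it is automatically invariant under reversal and the ``WLOG $X=1011$'' reduction is harmless. Your extension argument, by contrast, uses that ${\pmb w}$ is \emph{right}-infinite; under reversal this would become a \emph{left}-extension obstruction, and in a one-sided word a factor need not be left-extendable (it might occur only at position $0$). Concretely, your argument disposes of $(1011,0101)$ and, by complement, $(0100,1010)$, but for the reversal-related pairs $(1101,1010)$ and $(0010,0101)$ the relevant factor ($1011$, resp.\ $0100$) could a priori sit only at the start of ${\pmb w}$. The fix is short: since any tail of ${\pmb w}$ still satisfies the hypotheses, Lemmas~\ref{0110}--\ref{0101} force every tail to have the same eight length-$4$ factors, so each such factor occurs infinitely often and is therefore left-extendable as well. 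With that one sentence added, your symmetry reduction goes through and the proof is complete.
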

\begin{proof}
By Lemma~\ref{0110}, the set of length 4 factors includes $0110$ and $1001$.
By Lemma~\ref{0011}, the set of length 4 factors includes $0011$ and $1100$.
Combining Lemmas \ref{0010} and \ref{0101} with the fact that ${\pmb w}$ has at most 8 length 4 factors, 
the set of length 4 factors contains exactly 3 words from $C=\{0100,0010,1011,1101\}$ and exactly one word from $\{0101,1010\}$.
Since each word of $C$ maps to each of the others under complement and/or reversal, assume without loss of generality that the 3 words from $C$ are
$0010$, $0100$, and $1101$. Thus ${\pmb w}$ does not contain the factor $1011$.

A backtrack search shows that the longest $\frac{5}{2}^+$ power free binary word not containing $1011$ or $1010$
as a factor has length 20. We conclude that ${\pmb w}$ contains the factor $1010$, so that set of length 4 factors of ${\pmb w}$ is
$$\{0110, 1001, 0011, 1100, 0010, 0100, 1101, 1010\}.$$ 
\end{proof}
\begin{lemma}\label{xyxyx bin} Suppose that $u\in\Sigma_3^*$,  $w\in\Sigma_2^*$, and $g:\Sigma_3^*\rightarrow \Sigma_2^*$ is a non-erasing morphism.
If $g(u)=w$, and $w$ avoids $\frac{5}{2}^+$ powers, then $u$ has no factor $xyxyx$ where $\pi(x)>\pi(y)$.
\end{lemma}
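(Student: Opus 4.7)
The plan is to show the contrapositive: if $u$ contains a factor $xyxyx$ with $\pi(x) > \pi(y)$, then applying the non-erasing morphism $g$ gives $g(x)g(y)g(x)g(y)g(x)$ as a factor of $w$, and I will verify this factor is a $\frac{5}{2}^+$-power.

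First I would set $A = g(x)$ and $B = g(y)$ and observe that the word $ABABA$ has length $3|A|+2|B|$ and admits $|A|+|B|$ as a period. Hence its exponent (length divided by \emph{smallest} period) is at least $\frac{3|A|+2|B|}{|A|+|B|}$. A quick algebraic manipulation shows that this quantity strictly exceeds $\frac{5}{2}$ exactly when $|A| > |B|$, i.e., when $|g(x)| > |g(y)|$.

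Next I would translate the Parikh condition into a length inequality. Writing $|g(x)| - |g(y)| = \sum_{i=0}^{2}(|x|_i - |y|_i)\,|g(i)|$, each term is nonnegative since $\pi(x) > \pi(y)$ forces $|x|_i \geq |y|_i$ for all $i$, and at least one term is strictly positive because there is some $i$ with $|x|_i > |y|_i$ together with $|g(i)| \geq 1$ (here is where non-erasure is used). This gives $|g(x)| > |g(y)|$, so $g(x)g(y)g(x)g(y)g(x)$ is a $\frac{5}{2}^+$-power factor of $w$, contradicting the hypothesis.

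There is no genuine obstacle: the argument is a direct computation combining the period/exponent of $ABABA$ with the fact that a non-erasing morphism applied to a Parikh-dominated word produces a strictly longer image. The only point requiring minor care is that the stated period $|A|+|B|$ need not be the smallest period of $ABABA$; this is harmless since a smaller period would only make the exponent larger.
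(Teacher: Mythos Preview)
Your argument is correct and follows exactly the same approach as the paper: assume a factor $xyxyx$ with $\pi(x)>\pi(y)$, use non-erasure to deduce $|g(x)|>|g(y)|$, and conclude that $g(x)g(y)g(x)g(y)g(x)$ is a $\tfrac{5}{2}^+$-power in $w$. The paper's proof is a two-line version of yours; your added remarks on the period computation and on why the Parikh inequality forces $|g(x)|>|g(y)|$ simply make explicit what the paper leaves to the reader.
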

\begin{proof} Suppose $u$ has a factor $xyxyx$ where $\pi(x)>\pi(y)$. Then $|g(x)|>|g(y)|$, and $w$ contains the 
$\frac{5}{2}^+$ power $g(x)g(y)g(x)g(y)g(x)$.
\end{proof}
\begin{lemma}\label{xyxyx tern} Suppose that $u,v\in\Sigma_3^*$,  and $f:\Sigma_3^*\rightarrow \Sigma_3^*$ is a non-erasing morphism.
If $f(v)=u$, and $u$ has no factor $xyxyx$ where $\pi(x)>\pi(y)$, then $v$ has no factor $XYXYX$ where $\pi(X)>\pi(Y)$.
\end{lemma}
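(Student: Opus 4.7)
The plan is to prove the contrapositive. Assume $v$ contains a factor $XYXYX$ with $\pi(X) > \pi(Y)$. Since $f$ is a morphism, $u = f(v)$ contains
\[ f(X)f(Y)f(X)f(Y)f(X) = xyxyx, \]
where $x = f(X)$ and $y = f(Y)$. To contradict the hypothesis on $u$, it suffices to verify that a non-erasing morphism preserves the Parikh-vector order defined just before Lemma~\ref{xyxyx bin}, i.e., that $\pi(X) > \pi(Y)$ forces $\pi(x) > \pi(y)$.

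For this, I would pass to the incidence matrix $M$ of $f$, defined by $M_{i,j} = |f(j)|_i$, so that $\pi(f(w)) = M\pi(w)$ for every $w \in \Sigma_3^*$. The weak componentwise inequality $\pi(X) \geq \pi(Y)$ lifts to $M\pi(X) \geq M\pi(Y)$ because $M$ has non-negative entries. For the strictness condition, pick $j$ with $|X|_j > |Y|_j$; since $f$ is non-erasing, $f(j)$ is non-empty, so some row index $i$ satisfies $M_{i,j} \geq 1$, giving
\[ (M\pi(X))_i - (M\pi(Y))_i \;\geq\; M_{i,j}\bigl(|X|_j - |Y|_j\bigr) \;>\; 0, \]
so $\pi(x) > \pi(y)$ as required.

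I do not expect any real obstacle; the argument is a one-line consequence of the linearity of the Parikh map under morphisms. The only subtlety is that the non-erasing hypothesis is used in exactly one place, namely to guarantee that no column of $M$ is zero, so that a strictly positive coordinate of $\pi(X) - \pi(Y)$ cannot be annihilated by $M$. Were erasures permitted, a strict difference could be lost in the image, and the conclusion could fail.
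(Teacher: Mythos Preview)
Your proof is correct and follows essentially the same approach as the paper: both argue the contrapositive, set $x=f(X)$, $y=f(Y)$, and observe that $u$ then contains $xyxyx$ with $\pi(x)>\pi(y)$. The paper simply asserts that $\pi(x)>\pi(y)$, whereas you supply the details via the incidence matrix; your strictness step is valid because all terms $M_{i,k}(|X|_k-|Y|_k)$ are nonnegative, so the sum is bounded below by the single positive term $M_{i,j}(|X|_j-|Y|_j)$.
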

\begin{proof} Suppose $v$ has a factor $XYXYX$ where $\pi(X)>\pi(Y)$. Let $x=f(X)$ and $y=f(Y)$. Then $\pi(x)>\pi(y)$, and $u$ contains the factor $xyxyx$.
\end{proof}
\begin{lemma}\label{lemma forcing outer g}
Let ${\pmb w}$ be an infinite binary word
which avoids $\frac{5}{2}^+$ powers. Suppose that the set of length 4 factors of ${\pmb w}$ is
$$F=\{0110, 1001, 0011, 1100, 0010, 0100, 1101, 1010\}.$$
Then a final segment of ${\pmb w}$ has the form $g({\pmb u})$ for some proper ${\pmb u}\in\Sigma_3^\omega$.
\end{lemma}
\begin{proof} Since $111$ is not a factor of ${\pmb w}$, any final segment of ${\pmb w}$ beginning with $0$ has the form
$g({\pmb u})$ for some ${\pmb u}\in\Sigma_3^\omega$. We will show, replacing ${\pmb u}$ by one of its final segments if necessary, that 
${\pmb u}$ is proper. 

The fact that ${\pmb u}$ has no factor $xyxyx$ where $\pi(x)>\pi(y)$ follows from Lemma~\ref{xyxyx bin}.

We conclude by showing that none of the words $00$, $11$, $22$, $20$, $10101$, $2121$, or $10210210$ is a factor of ${\pmb u}$:\vspace{.1in}

\noindent{\bf Word $00$:} If $00$ is a factor of ${\pmb u}$, then ${\pmb w}$ contains the factor $g(00)=011011$. However, then ${\pmb w}$ contains $1011$, which is not in $F$.\vspace{.1in}

\noindent{\bf Word $11$:} If $11$ is a factor of ${\pmb u}$, then $11a$ is a factor of ${\pmb u}$ for some $a\in\Sigma_3$. Then ${\pmb w}$ contains the factor $g(11a)$, which starts with $g(1)g(1)0=000$, a $\frac{5}{2}^+$ power.\vspace{.1in}

\noindent{\bf Word $22$:} If $22$ is a factor of ${\pmb u}$, then ${\pmb w}$ contains the factor $g(22)=0101\notin F$.\vspace{.1in}

\noindent{\bf Word $20$:} If $20$ is a factor of ${\pmb u}$, then ${\pmb w}$ contains the factor $g(20)=01011$, which starts with $0101\notin F$.\vspace{.1in}

\noindent{\bf Word $10101$:} If $10101$ is not a factor of ${\pmb u}$ more than once, replace ${\pmb u}$ by one of its final segments not containing 
$10101$. 

On the other hand, if $10101$ is a factor of ${\pmb u}$ more than once, then $a10101b$ is a factor of ${\pmb u}$ for some $a,b\in\Sigma_3$. Since $11$ is not a factor of ${\pmb u}$, we can in fact specify that $a\in\Sigma_3-\{1\}.$ This implies that $1$ is the last letter of $g(a)$. Also, $0$ is the first letter of $g(b)$. Then ${\pmb w}$ contains the factor $g(a10101b)$, which 
contains $1g(10101)0=10011001100$, a $\frac{5}{2}^+$ power.\vspace{.1in}

\noindent{\bf Word $2121$:} If $2121$ is a factor of ${\pmb u}$, then $2121a$ is a factor of ${\pmb u}$ for some $a\in\Sigma_3-\{1\}$. Thus $01$ is a prefix of $g(a)$, and ${\pmb w}$ contains the factor $g(2121)01=01001001$, a $\frac{5}{2}^+$ power.\vspace{.1in}

\noindent{\bf Word $10210210$:} If $10210210$ is a factor of ${\pmb u}$, then ${\pmb w}$ contains the factor $$g(10210210)=0011010011010011,$$ a $\frac{5}{2}^+$ power.
\end{proof}

We can now prove Theorem~\ref{forcing inner f}:

\begin{proof}[of Theorem~\ref{forcing inner f}] Assume without loss of generality, replacing ${\pmb u}$ by a final segment if necessary, that ${\pmb u}$ starts with the letter $0$. 
Write ${\pmb u}$ as a concatenation of $0$-blocks, i.e.,  words which start with $0$, and contain the letter $0$ exactly once. Since ${\pmb u}$ is proper, it does not contain a factor $00$ or $20$. It follows that its $0$-blocks end with $1$. Since every occurrence of $2$ in a proper word can only be followed by a $1$, ${\pmb u}$ cannot contain the factor $212$; otherwise it would contain the forbidden factor $2121$. We conclude that the $0$-blocks of ${\pmb u}$ are among
$01$, $021$ and $0121$. Notice that the last two of these words are always followed by a $0$ in ${\pmb u}$. We thus conclude that
a final segment of ${\pmb u}$ has the form $f({\pmb v})$ for some ${\pmb v}\in\Sigma_3^\omega$.
We will show, replacing ${\pmb v}$ by one of its final segments if necessary, that 
${\pmb v}$ is proper. 

The fact that ${\pmb v}$ has no factor $xyxyx$ where $\pi(x)>\pi(y)$ follows from Lemma~\ref{xyxyx tern}.

We conclude by showing that a final segment of ${\pmb v}$ contains none of the words $22$, $20$, $00$, $11$, $10101$, $2121$, or $10210210$ as a factor:\vspace{.1in}

\noindent{\bf Word $22$:} If $22$ is not a factor of ${\pmb v}$ more than once, then replace ${\pmb v}$ by one of its final segments not containing $22$. Otherwise, $22$ is a factor of ${\pmb v}$ more than once, so that ${\pmb u}$ contains a factor $1f(22)=10101.$ This is impossible, since ${\pmb u}$ is proper.\vspace{.1in}

\noindent{\bf Word $20$:} If $20$ is not a factor of ${\pmb v}$ more than once, then replace ${\pmb v}$ by one of its final segments not containing $20$. Otherwise, $20$ is a factor of ${\pmb v}$ more than once, so that ${\pmb u}$ contains a factor $1f(20)=1010121$, which starts with $10101$. This is impossible, since ${\pmb u}$ is proper.\vspace{.1in}

\noindent{\bf Word $00$:} Suppose that $00$ is a factor of ${\pmb v}$. If $000$ is a factor of ${\pmb v}$, then $f(000)=012101210121$ is a factor of ${\pmb u}$. However $012101210121=xyxyx$ where $x=0121$, $y=\epsilon$, and cannot be a factor of ${\pmb u}$. It follows that $000$ is not a factor of ${\pmb v}$. 

If $00$ is not a factor of ${\pmb v}$ more than once, then replace ${\pmb v}$ by one of its final segments not containing $00$. Otherwise, $00$ is a factor of ${\pmb v}$ more than once, so that $100a$ is a factor of ${\pmb v}$ for some $a\in\Sigma_3$. Since $f(a)$ starts with $0$, this implies that ${\pmb u}$ has a factor $f(100a)$, starting with $021012101210$. This contains the word $xyxyx$ where $x=210$, $y=1$, which is impossible, since ${\pmb u}$ is proper.\vspace{.1in}

\noindent{\bf Word $11$:} If $11$ is not a factor of ${\pmb v}$ more than once, then replace ${\pmb v}$ by one of its final segments not containing $11$. Otherwise, $11$ is a factor of ${\pmb v}$ more than once, so that ${\pmb u}$ contains a factor $1f(11)0=10210210$, which is impossible, since ${\pmb u}$ is proper.\vspace{.1in}

\noindent{\bf Word $10101$:} If $10101$ is not a factor of ${\pmb v}$ more than once, then replace ${\pmb v}$ by one of its final segments not containing $10101$. Otherwise, $10101$ is a factor of ${\pmb v}$ more than once, so that ${\pmb u}$ contains a factor $1f(10101)0=10210 12 10210 12 10210=xyxyx$, where $x=10210$ and $y=12$. This is impossible, since ${\pmb u}$ is proper.\vspace{.1in}

\noindent{\bf Word $2121$:} If $2121$ is not a factor of ${\pmb v}$ more than once, then replace ${\pmb v}$ by one of its final segments not containing $2121$. Otherwise, $2121$ is a factor of ${\pmb v}$ more than once. Since $22$ is not a factor of $v$, $a2121b$ is a factor of ${\pmb v}$  for some $a,b\in\Sigma_3$ where $a\ne 2$. Then $f(a)$ ends in $21$ and $f(b)$ begins with $0$. Thus ${\pmb u}$ contains the factor $21f(2121)0=2101021010210=xyxyx$ where $x=210$, and $y=10$, which is impossible, since ${\pmb u}$ is proper.\vspace{.1in}

\noindent{\bf Word $10210210$:} If $10210210$ is a factor of ${\pmb v}$, then ${\pmb u}$ contains the factor $$f(10210210)=0210121 01 0210121 01 0210121=xyxyx,$$ where $x=0210121$, and $y=01$, which is impossible, since ${\pmb u}$ is proper.

The proof for antiproper words is the same, {\em mutatis mutandi}.
\end{proof}

We can now prove Theorem~\ref{forcing outer g}:

\begin{proof}[of Theorem~\ref{forcing outer g}] The first case follows from Theorem~\ref{forcing inner f} and Lemma~\ref{lemma forcing outer g} by induction. The other cases follow, {\em mutatis mutandi}.
\end{proof}

As an example of the third case of the theorem, \cite{shur19} consider a word $\tau({\bf G})$, where $\tau:\Sigma_3^*\rightarrow \Sigma_2^*$ is the morphism given by
\begin{align*}
\tau(0)&=0\\
\tau(1)&=01\\
\tau(2)&=011
\end{align*}
and ${\bf G}$ is the fixed point of $\theta$, where $\theta:\Sigma_3^*\rightarrow \Sigma_3^*$ is the morphism given by
\begin{align*}
\theta(0)&=01\\
\theta(1)&=2\\
\theta(2)&=02.
\end{align*}
Letting $\sigma$ be the permutation $0\to 1\to 2\to 0$, one checks that 
$\tau=g\sigma$ and $\theta^2=\sigma^{-1}h\sigma$. It follows that
\begin{align*}
\tau({\bf G})&=\tau(\theta^{2n}(\theta^\omega(0)))\\
&=g\sigma((\sigma^{-1}h\sigma)^{n-1}(\sigma^{-1}h\sigma(\theta^\omega(0)))\\
&=gh^{n-1}({\pmb u})),
\end{align*} 
where ${\pmb u}=h(\sigma(\theta^\omega(0)))$, which is antiproper.

Many open problems remain concerning the relationship between low factor complexity and avoidable powers.
Moving to a ternary alphabet, \cite{shur19} showed that the word $${\bf G} = 0120201020120102012\cdots$$
has critical exponent $2.4808627\cdots$ and factor complexity $2n+1$ for all $n \geq 1$.  They conjectured that
this exponent is minimal among all infinite ternary words with complexity $2n+1$. This conjecture was recently confirmed by \cite{currie2025}.  It would thus be natural to explore the possibility
of a structure theorem for this class of words.
\section{Appendix: Python code and output}
The backtrack searches mentioned in the paper run quickly in Python. Here is our code and its output:
{\footnotesize
\begin{verbatim}
def fhpf(w): #Word w is 5/2+ power suffix free
    p=1
    while (5*p<2*len(w)):
        if (w[(-(p+1)//2)-p:]==w[(-(p+1)//2)-2*p:-p]):
            return(False)
        p=p+1
    return(True)

def good(w): # Word w has no suffix which is a 5/2+ power, or is in the
             # set Factors.
    for f in Factors:
        k=len(f)
        if ((len(w)>= k)and(w[-k:]==f)):
            return(False)
    return(fhpf(w))

def search(target): # This returns the lexicographically least word not
                    # containing a 5/2+ power or a word in the set Factors
    w=''
    Max=0
    while (len(w)<=target):
        if (good(w)):
            Max=max(Max,len(w))
            if (len(w)==target):
                return(w)
            w+='0'
        else:
            while((len(w)>0) and (w[-1]=='1')):
                w=w[:-1]
            if(len(w)==0):
                print('Longest 5/2+-power-free word with no factor in ',Factors,'
                    has length ',Max)
                return()
            c=chr(ord(w[-1])+1)          
            w=w[:-1]
            w+=c
    return()    
       

# Lemma 1

print('=============')
print('Computations for Lemma 1')
print(' ')

Factors=['0110']
search(200)

# Lemma 2

print(' ')
print('=============')
print('Computations for Lemma 2')
print(' ')

C=['0010','0100','1011','1101']
for i in range(4):
    for j in range(i,4):
        Factors=[C[i]]
        Factors.append(C[j])
        search(200)


# Lemma 3

print(' ')
print('=============')
print('Computations for Lemma 3')
print(' ')

A=['0010','0100','0101','1010','1011','1101','1100']
for j in A:
    Factors=['0011']
    Factors.append(j)
    search(200)


# Lemma 4

print(' ')
print('=============')
print('Computations for Lemma 4')
print(' ')

D=['00100110','01001100','10011001','00110010',
 '01100100','11001001','10010011','00110011',
 '01100110','11001101','10011011','00110110',
 '01101100','11011001','10110010','10110011','11001100']
for j in D:
    Factors=['0101','1010']
    Factors.append(j)
    search(200)

# Theorem 4

print(' ')
print('=============')
print('Computations for Theorem 4')
print(' ')

Factors=['1011','1010']
search(200)

==============================================================================

=============
Computations for Lemma 1
 
Longest 5/2+-power-free word with no factor in  ['0110']  has length  14
 
=============
Computations for Lemma 2
 
Longest 5/2+-power-free word with no factor in  ['0010', '0100']  has length  44
Longest 5/2+-power-free word with no factor in  ['0010', '1011']  has length  28
Longest 5/2+-power-free word with no factor in  ['0010', '1101']  has length  13
Longest 5/2+-power-free word with no factor in  ['0100', '1011']  has length  13
Longest 5/2+-power-free word with no factor in  ['0100', '1101']  has length  28
Longest 5/2+-power-free word with no factor in  ['1011', '1101']  has length  44
 
=============
Computations for Lemma 3
 
Longest 5/2+-power-free word with no factor in  ['0011', '0010']  has length  15
Longest 5/2+-power-free word with no factor in  ['0011', '0100']  has length  31
Longest 5/2+-power-free word with no factor in  ['0011', '0101']  has length  12
Longest 5/2+-power-free word with no factor in  ['0011', '1010']  has length  18
Longest 5/2+-power-free word with no factor in  ['0011', '1011']  has length  15
Longest 5/2+-power-free word with no factor in  ['0011', '1101']  has length  31
Longest 5/2+-power-free word with no factor in  ['0011', '1100']  has length  30
\end{verbatim} 
{\scriptsize
\begin{verbatim}
=============
Computations for Lemma 4
 
Longest 5/2+-power-free word with no factor in  ['0101', '1010', '00100110']  has length  24
Longest 5/2+-power-free word with no factor in  ['0101', '1010', '01001100']  has length  50
Longest 5/2+-power-free word with no factor in  ['0101', '1010', '10011001']  has length  33
Longest 5/2+-power-free word with no factor in  ['0101', '1010', '00110010']  has length  50
Longest 5/2+-power-free word with no factor in  ['0101', '1010', '01100100']  has length  24
Longest 5/2+-power-free word with no factor in  ['0101', '1010', '11001001']  has length  24
Longest 5/2+-power-free word with no factor in  ['0101', '1010', '10010011']  has length  24
Longest 5/2+-power-free word with no factor in  ['0101', '1010', '00110011']  has length  52
Longest 5/2+-power-free word with no factor in  ['0101', '1010', '01100110']  has length  33
Longest 5/2+-power-free word with no factor in  ['0101', '1010', '11001101']  has length  50
Longest 5/2+-power-free word with no factor in  ['0101', '1010', '10011011']  has length  24
Longest 5/2+-power-free word with no factor in  ['0101', '1010', '00110110']  has length  24
Longest 5/2+-power-free word with no factor in  ['0101', '1010', '01101100']  has length  24
Longest 5/2+-power-free word with no factor in  ['0101', '1010', '11011001']  has length  24
Longest 5/2+-power-free word with no factor in  ['0101', '1010', '10110010']  has length  88
Longest 5/2+-power-free word with no factor in  ['0101', '1010', '10110011']  has length  50
Longest 5/2+-power-free word with no factor in  ['0101', '1010', '11001100']  has length  52
 
=============
Computations for Theorem 4
 
Longest 5/2+-power-free word with no factor in  ['1011', '1010']  has length  20
()
\end{verbatim}
}
\normalsize
\nocite{*}
\bibliographystyle{abbrvnat}
\bibliography{Low_complexity_fhpf_words}

\end{document}